\numberwithin{equation}{section}
\theoremstyle{plain}
\newtheorem{thm}{Theorem}[section]
\newtheorem{lem}{Lemma}[section]
\newtheorem*{mainthm}{Theorem}
\newtheorem*{mainprop}{Proposition}
\theoremstyle{remark}
\newtheorem{rem}{Remark}[section]
\newcommand{\calI}{\mathcal{I}}
\newcommand{\calL}{\mathcal{L}}
\newcommand{\calO}{\mathcal{O}}
\newcommand{\bC}{\mathbb{C}}
\newcommand{\bH}{\mathbb{H}}
\newcommand{\bR}{\mathbb{R}}
\renewcommand{\Re}{\operatorname{Re}}
\renewcommand{\Im}{\operatorname{Im}}
\newcommand{\Orth}{\operatorname{O}}
\newcommand{\Uni}{\operatorname{U}}
\newcommand{\CO}{\operatorname{CO}}
\newcommand{\CU}{\operatorname{CU}}
\newcommand{\Ric}{\operatorname{Ric}}
\newcommand{\Scal}{\operatorname{Scal}}
\newcommand{\Ein}{\operatorname{Ein}}
\renewcommand{\th}{\theta}
\newcommand{\up}{\Upsilon}
\newcommand{\conj}{\overline}
\newcommand{\ol}{\overline}
\newcommand{\pa}{\partial}
\newcommand{\wt}{\widetilde}
\newcommand{\wh}{\widehat}
\title[Normal form for 
 pseudo-Einstein contact froms]{Normal form for 
 pseudo-Einstein contact forms and intrinsic CR normal coordinates}
\author{Kengo Hirachi}
\address{Graduate School of Mathematical Sciences, The University of Tokyo, 3-8-1 Komaba, Meguro-ku, Tokyo 153-8914 JAPAN}
\email{hirachi@ms.u-tokyo.ac.jp}
\subjclass[2010]{32V05 (primary), 32Q15 (secondary)}  
\keywords{CR manifolds; pseudo-Einstein contact form; Tanaka-Webster connection; normal coordinates} 
\begin{document}
\maketitle
\begin{abstract}
We give a normal form for pseudo-Einstein contact forms  and apply it to construct intrinsic CR normal coordinates parametrized by the structure group of CR geometry. 
The proof is based on the construction of parabolic normal coordinates by Jerison and Lee.
\end{abstract}
\section{Introduction}
Normal coordinates are basic tools in geometric analysis, which give optimal approximations by the flat model and simplify technical computations.
In Riemannian geometry, geodesic normal coordinates $x$ are the canonical choice; they are parametrized by the orthogonal group $\Orth(n)$ and the metric tensor satisfies
\[
g_{ij}=\delta_{ij}+O(|x|^{2}).
\]
If we allow conformal changes of the metric $\wh g=e^{2\up}g$, then we can get  better approximations,
which are called {\em conformal normal coordinates}.  They are parametrized by the structure group of conformal geometry, $\CO(n)\ltimes\bR^{n}$, where $\CO(n)=\Orth(n)\times\bR_{+}$ is the conformal orthogonal group.

There are (at least) two standard choices.  The first one was given by Robin Graham in his study of local conformal invariant \cite{FG}.  He found that, for each point $p$, there is a conformal scale $\up$ such that
the symmetrized covariant derivatives of the Ricci tensor of $\wh g$ vanish to the infinite order:
\[
\Ric_{(a_{1}a_{2},a_{3}\dots a_{k})}(p)=0\quad \text{for  }k\ge2.
\]
Such a scale $\up$ is uniquely determined once the first jets
\[
(e^{\up(p)},\pa_{j}\up(p))\in\bR_{+}\times \bR^{n}
\]
 are specified.  Then the geodesic normal coordinates for $\wh g$ give  conformal normal coordinates.  Together with the choice of an orthonormal frame of the tangent space at $p$, the coordinates are parametrized by $\Orth(n)\times\bR_{+}\times\bR^{n}$.  

Another class of conformal normal coordinates was introduced by Lee and Parker \cite{LP} in the analysis of Yamabe functional.  They normalized the scale $\up$ by the condition:
 \[
  \det \wh g_{ij}(x)=1+O(|x|^{k})\quad \text{for  }k\ge2.
  \]
Again, such a scale is determined by the first jets of $\up$.  

In CR geometry, or the biholomorphic geometry of real hypersurfaces in  a complex manifold,
normal coordinates were first introduce by Moser \cite{CM}. He imposed a condition on the defining function of the surface and fixed holomorphic coordinates up to an action of the structure group, $\CU(n)\ltimes\bH^{n}$, where $\CU(n)=\Uni(n)\times\bR_{+}$ is the conformal unitary group and $\bH^{n}=\bC^{n}\times\bR$ is the Heisenberg group.
Moser's normal coordinates were used in the invariant theory of CR geometry by Fefferman \cite{F}, but they are not easy to handle in the setting of intrinsic pseudo-hermitian geometry.

  Another class of CR normal coordinates was constructed by Jerison and Lee \cite{JL} in the study of CR Yamabe problem, in analogy with the conformal case.  They first fixed a contact form by a curvature condition and then make {\em parabolic normal coordinates} for the scale; see \S3.  The curvature condition is rather complicated: let 
  \begin{align*}
&Q_{\alpha\beta}=(n+2)i A_{\alpha\beta}, \quad
Q_{\alpha\ol\beta}=\Ric_{\alpha\ol\beta},\\
&Q_{0\alpha}=4A_{\alpha\beta,}{}^{\beta}+\frac{2i}{n+1}\Scal_{\alpha},\\
&Q_{00}=\frac{16}{n}\Im A_{\alpha\beta,}{}^{\alpha\beta}+\frac{4}{n(n+1)}\Delta_{b}\Scal,
\end{align*}
where
$\Ric_{\alpha\ol\beta}$, $\Scal$ and $A_{\alpha\beta}$ are respectively the Ricci tensor, the scalar curvature and the torsion tensor of the Tanaka-Webster connection for a contact form $\th$; see \S2.
The indices preceded by a comma denote covariant derivatives and $\Delta_{b}$ is the sublaplacian.
If we use indices $I,J$ which run through $\{0,1\dots, n,\ol 1,\dots,\ol n\}$, then the two tensors can be written in a unified form $Q_{IJ}$; for the components which are note defined above, we set
 $Q_{\ol I\ol J}=\ol{Q_{IJ}}$, $Q_{IJ}=Q_{JI}$.
 Then the normalization is given by the symmetrized covariant derivatives of $Q_{IJ}$:
\begin{equation}\label{Qsym}
Q_{(I_{1}I_{2},I_{3}\dots I_{k})}(p)=0\quad \text{for } k\ge 2.
\end{equation}
The contact forms satisfying this condition are parametrized by
$\bR_{+}\times\bH^{n}$, which is the first jets of the scale.
Then a choice of orthonormal frame of CR tangent bundle $T^{1,0}_{p}$,  parametrized by $\Uni(n)$, determines parabolic normal coordinates.   Hence such coordinates are parametrized by
the structure group $\CU(n)\ltimes\bH^{n}$.

This construction gives intrinsic CR normal coordinates that are useful for asymptotic anlysis of the CR Yamabe functional, but the properties of the normal scale $\th$ is not easy to understand.  Moreover, while the recent progress of CR geometry gives more focus on pseudo-Einstein contact forms in connection with $Q$ and $Q$-prime curvatures \cite{CG, CY, H, HMM}, the normalization \eqref{Qsym} is not compatible with the Einstein equations (see \S2)
\begin{equation}\label{Einstein-eq}
\Ric_{\alpha\ol\beta}=\frac{1}{n}\Scal\cdot h_{\alpha\ol\beta},
\quad  \Scal_{\alpha}=inA_{\alpha\beta,}{}^\beta.
\end{equation}
In contrast with the conformal case, these Einstein equations always have solutions (at least as $\infty$-jets at a point; see Remark \ref{remark}), and it is natural to choose contact forms within this class.  Another merit to work within the class of pseudo-Einstein contact forms is that the scaling functions are restricted to CR pluriharmonic functions.  It already gives a strong normalization on the scale and very much simplifies the computation.

To state our result, let
\[A_{\alpha 0}=A_{0\alpha}:=\frac{-i}{n+1}A_{\alpha\beta,}{}^{\beta},
\quad 
A_{00}:=\frac{-1}{n(n+1)}A_{\alpha\beta,}{}^{\alpha\beta}.
\]
If we use indices $I,J\in\{0,1,\dots,n\}$, we obtain a symmetric two tensor $A_{IJ}$.
We say that $\th$ is {\em formally pseudo-Einstein} at $p$ if the Einstein equations \eqref{Einstein-eq} hold to the infinite order at  $p$.

\begin{mainthm}
Let $M$ be a strictly pseudoconvex CR manifold of dimension $2n+1$.
For each point $p\in M$, there is a formally pseudo-Einstein contact form $\th$ at $p$
such that $\Scal(p)=0$ and 
\[
A_{(I_{1}I_{2},I_{3}\dots I_{k})}(p)=0\quad \text{for }  k\ge2.
\]
Here each $I_{j}$ runs thought $\{0,1,\dots, n\}$.  Moreover, all jets of such $\th$ at $p$ is uniquely determined if one fixes the first jet, which is parametrized by $\bR_{+}\times\bH^{n}$.
\end{mainthm}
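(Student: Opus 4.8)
The plan is to fix once and for all a formally pseudo-Einstein contact form $\th_0$ at $p$ --- whose existence is guaranteed by Remark~\ref{remark} --- and to look for the normalized $\th$ inside its pseudo-Einstein family. The structural fact I would use first is the one recorded in the introduction: a scaling $\wh\th=e^{2\up}\th_0$ is again formally pseudo-Einstein precisely when $\up$ is a formal CR pluriharmonic function. The problem then reduces to choosing the jet at $p$ of a single pluriharmonic function $\up$ so that the Tanaka--Webster curvature of $\th=e^{2\up}\th_0$ satisfies $\Scal(p)=0$ and $A_{(I_1I_2,I_3\dots I_k)}(p)=0$. I would run this normalization order by order in the parabolic weight of Jerison and Lee (\S3, \cite{JL}), under which a holomorphic derivative carries weight $1$ and the transverse derivative $\pa_0$ carries weight $2$.

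The first ingredient is a clean transformation law. Starting from the conformal-change formulas for torsion, Ricci and scalar curvature recalled in \S2, I would compute the effect of the pluriharmonic change $\wh\th=e^{2\up}\th$ on $A_{IJ}$ and on each of its symmetrized covariant derivatives, and show that
\[
\wh A_{(I_1I_2,I_3\dots I_k)}(p)=A_{(I_1I_2,I_3\dots I_k)}(p)+c_k\,\up_{(I_1\dots I_k)}(p)+R_k(p),
\]
where $\up_{(I_1\dots I_k)}$ is the totally symmetrized covariant derivative, the remainder $R_k$ depends only on jets of $\up$ of strictly lower weight, and $c_k\neq0$ is a universal constant. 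The delicate point is that $A_{0\alpha}$ and $A_{00}$ are the specific traces $\tfrac{-i}{n+1}A_{\alpha\beta,}{}^{\beta}$ and $\tfrac{-1}{n(n+1)}A_{\alpha\beta,}{}^{\alpha\beta}$; I expect these exact constants to be forced by the requirement that the $\pa_0$-derivatives of $\up$ assemble into the single symmetrized jet above rather than appearing with mismatched coefficients. A companion computation gives $\Scal(p)$ as $\Scal_0(p)$ plus a nonzero multiple of a weight-$2$ jet of $\up$, to be used in arranging $\Scal(p)=0$.

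The second ingredient is the parametrization of CR pluriharmonic jets: such a function is the real part of a formal CR function, which in CR-holomorphic coordinates $(z^\alpha,w)$ is a holomorphic power series, so its free data $\{\pa_z^A\pa_w^m F(p)\}$ is graded by parabolic weight. The pluriharmonic constraint restricts only the mixed and barred derivatives of $\up$ and leaves the \emph{unbarred} symmetric jets free, which is exactly why the normalization --- whose transformation law above involves only $\up_{(I_1\dots I_k)}$ --- stays compatible with the pseudo-Einstein class. The step I expect to be the main obstacle is to make this precise: namely, for each weight $\ge3$, that the map $\up\mapsto\up_{(I_1\dots I_k)}(p)$ from the free pluriharmonic data to the symmetric tensors with indices in $\{0,1,\dots,n\}$ is a bijection. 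Both sides are symmetric $k$-tensors over an $(n{+}1)$-dimensional index set and hence have equal dimension, so the content is the absence of kernel and cokernel, a representation-theoretic count in which the trace definitions of $A_{0\alpha},A_{00}$ and the leading symbol $c_k$ conspire to give invertibility; the one place where an unbarred derivative fails to see the full holomorphic jet is at the bottom, where $\up_0=\pa_0\up$ records only $\Re\pa_w F$, the complementary imaginary datum being precisely what $\Scal(p)=0$ pins down.

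Granting this bijection, the theorem follows by induction on the weight. At the lowest weights the free pluriharmonic data are the value $e^{\up(p)}\in\bR_+$, the holomorphic first derivatives $\pa_\alpha\up(p)\in\bC^n$, and the transverse derivative $\pa_0\up(p)\in\bR$; these are exactly the first jet parametrized by $\bR_+\times\bH^n$, which I leave free. The remaining weight-$2$ datum is fixed by $\Scal(p)=0$, and the holomorphic Hessian $\up_{\alpha\beta}(p)$ by $A_{\alpha\beta}(p)=0$. For the inductive step, assume $\up$ has been determined through weight $\ell-1$ so that all lower conditions hold; then the displayed transformation law rewrites the weight-$\ell$ equations $A_{(I_1\dots I_k)}(p)=0$ as an inhomogeneous linear system for the weight-$\ell$ free pluriharmonic datum whose principal part is the bijection above. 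Surjectivity yields existence and injectivity yields uniqueness, completing the induction and showing that, once the first jet in $\bR_+\times\bH^n$ is fixed, every higher jet of $\up$ --- and hence of $\th$ --- is uniquely determined.
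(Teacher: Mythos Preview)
Your strategy coincides with the paper's: start from a formally pseudo-Einstein $\th_{0}$, scale by a CR pluriharmonic $\up$, and determine the jets of $\up$ by induction on the parabolic weight. The paper streamlines the execution in two ways that together dissolve the ``main obstacle'' you anticipate. First, it writes $\up=\Re f$ with $f$ CR holomorphic and works throughout with the \emph{complex} potential $f$; the transformation law (Lemma~\ref{variationAf}) then takes the clean form $\wh A_{IJ}-A_{IJ}=i^{\|IJ\|-1}Z_{IJ}f+\calO_{m+2-\|IJ\|}$, and after covariant differentiation $\wh A_{\calI}-A_{\calI}=i^{m-|\calI|+1}Z_{\calI}f+\calO_{2}$ for every $\calI$ of weight $m$. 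Second, it introduces the complex parabolic coordinates $z^{I}$ (with $z^{0}=-|z|^{2}/2+it$), in which the weight-$m$ part of any CR holomorphic $f$ is exactly a polynomial $\sum_{\|\calI\|=m}c_{\calI}\,z^{\calI}$. The weight-$m$ correction is then written down explicitly as $f_{m}=-\sum_{\|\calI\|=m}\tfrac{i^{m-|\calI|+1}}{|\calI|!}A_{\calI}\,z^{\calI}$, so no separate bijectivity argument or representation-theoretic count is required; Lemma~\ref{correction-lem} upgrades $f_{m}$ to a genuine CR holomorphic function modulo $\calO_{m+1}$. One detail in your outline is not quite right and is another reason to pass to $f$: the displayed principal term $c_{k}\,\up_{(I_{1}\dots I_{k})}$ fails once $0$-indices appear, since e.g.\ $\up_{00}=\Re f_{00}$ sees only the real half of the complex datum needed to kill the complex quantity $A_{00}$; in terms of $f$ the principal part is simply $i\,f_{(\calI)}$ and the linear system is diagonal.
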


Once a contact form is fixed, parabolic normal coordinates are determined by a choice of orthonormal frame of $T_{p}^{1,0}M$; see \S3.  Hence the coordinates are parametrized by the structure group $\CU(n)\ltimes\bH^{n}$.

From the normalization given in the theorem, we can easily observe the vanishing of several derivatives of the curvature and torsion tensors. 

\begin{mainprop}
For a pseudo-Einstein contact form $\th$ for which
 $\Scal$ and $A_{IJ}$ vanish at $p$, 
the following tensors also vanish at $p$.
\begin{equation}
\begin{aligned}\label{R-Ric-normal}
&
\Scal_{\alpha},\quad \Scal_{0},\quad \Scal_{\alpha}{}^{\alpha},  \quad \Scal^{\alpha}{}_{\alpha}, 
\\
&\Ric_{\alpha\conj\beta},\quad \Ric_{\alpha\conj\beta,}{}^{\conj\beta},
\quad \Ric_{\alpha\conj\beta,}{}^{\alpha\conj\beta},\\
&A_{\alpha\beta},\quad A_{\alpha\beta,}{}^{\beta},\quad A_{\alpha\beta,}{}^{\alpha\beta}.
\end{aligned}
\end{equation}
\end{mainprop}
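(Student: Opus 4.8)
The plan is to show that, modulo the pseudo-Einstein equations \eqref{Einstein-eq} and the standard Bianchi and commutation identities for the Tanaka-Webster connection recorded in \S2, each tensor in \eqref{R-Ric-normal} is a universal constant multiple of either $\Scal$, one of the components $A_{IJ}$, or its conjugate. Since $\Scal(p)=0$ and $A_{IJ}(p)=0$ by hypothesis, all of these vanish at $p$, and the proposition follows. I emphasize that these will be exact identities rather than estimates at $p$: every reduction uses only that the Levi form $h_{\alpha\conj\beta}$ is parallel together with \eqref{Einstein-eq}, so no curvature correction terms are produced.

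First I would dispose of the torsion line. The defining relations $A_{\alpha 0}=\frac{-i}{n+1}A_{\alpha\beta,}{}^{\beta}$ and $A_{00}=\frac{-1}{n(n+1)}A_{\alpha\beta,}{}^{\alpha\beta}$ show that the three quantities $A_{\alpha\beta}$, $A_{\alpha\beta,}{}^{\beta}$, $A_{\alpha\beta,}{}^{\alpha\beta}$ are, up to constants, exactly the components $A_{\alpha\beta}$, $A_{\alpha 0}$, $A_{00}$ of $A_{IJ}$, hence vanish at $p$. For the Ricci line, the first equation of \eqref{Einstein-eq} gives $\Ric_{\alpha\conj\beta}=\frac1n\Scal\,h_{\alpha\conj\beta}$, so $\Ric_{\alpha\conj\beta}(p)=0$; since $h$ is parallel, differentiating and contracting yields the exact identities $\Ric_{\alpha\conj\beta,}{}^{\conj\beta}=\frac1n\Scal_{\alpha}$ and $\Ric_{\alpha\conj\beta,}{}^{\alpha\conj\beta}=\frac1n\Scal_{\alpha}{}^{\alpha}$, which reduce the Ricci line to the scalar line.

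It remains to treat the scalar line, where I would establish the exact identities
\[
\Scal_{\alpha}=inA_{\alpha\beta,}{}^{\beta},\quad
\Scal_{\alpha}{}^{\alpha}=inA_{\alpha\beta,}{}^{\alpha\beta},\quad
\Scal^{\alpha}{}_{\alpha}=-in\,\conj{A_{\alpha\beta,}{}^{\alpha\beta}},\quad
\Scal_{0}=-2\Re A_{\alpha\beta,}{}^{\alpha\beta}.
\]
The first is the second equation of \eqref{Einstein-eq} and, with $A_{\alpha\beta,}{}^{\beta}(p)=0$, gives $\Scal_{\alpha}(p)=0$. The second follows by applying $\nabla^{\alpha}$ and contracting, and the third by the same computation applied to the conjugate of the first; both right-hand sides vanish at $p$ since $A_{\alpha\beta,}{}^{\alpha\beta}(p)=0$. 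The last identity is obtained by subtracting the middle two and invoking the scalar commutation rule $\Scal^{\alpha}{}_{\alpha}-\Scal_{\alpha}{}^{\alpha}=in\Scal_{0}$ from \S2. Hence $\Scal_{\alpha}$, $\Scal_{0}$, $\Scal_{\alpha}{}^{\alpha}$, $\Scal^{\alpha}{}_{\alpha}$ all vanish at $p$, and, with the reductions of the previous paragraph, so do $\Ric_{\alpha\conj\beta,}{}^{\conj\beta}$ and $\Ric_{\alpha\conj\beta,}{}^{\alpha\conj\beta}$.

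Because every step is an exact tensorial identity, I do not expect a substantial obstacle. The only delicate point is a careful fixing of conventions, namely the rule for raising indices with $h_{\alpha\conj\beta}$ and the ordering of iterated covariant derivatives, so that the reductions $\Scal_{\alpha}{}^{\alpha}=inA_{\alpha\beta,}{}^{\alpha\beta}$ and $\Scal^{\alpha}{}_{\alpha}=-in\,\conj{A_{\alpha\beta,}{}^{\alpha\beta}}$ hold on the nose and no spurious curvature terms enter; this is precisely where \eqref{Einstein-eq} and the parallelism of the Levi form are used.
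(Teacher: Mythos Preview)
Your proof is correct and follows essentially the same route as the paper: the torsion line is immediate from the definition of $A_{IJ}$, the scalar line comes from the second Einstein equation $\Scal_\alpha=inA_{\alpha\beta,}{}^\beta$ together with one differentiation and the scalar commutator $\Scal_\alpha{}^\alpha-\Scal^\alpha{}_\alpha=in\Scal_0$, and the Ricci line then follows from the first Einstein equation $\Ric_{\alpha\bar\beta}=\frac1n\Scal\,h_{\alpha\bar\beta}$ by differentiating with the parallel Levi form. The only slips are immaterial sign choices in the displayed constants (e.g.\ for $\Scal_0$), which do not affect the vanishing conclusion.
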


The vanishing of the tensors in \eqref{R-Ric-normal} and
\[
A_{\alpha\beta,\gamma}(p)=A_{(\alpha\beta,\gamma)}(p)=0
\]
 are used in \cite[Theorem 4.1]{JL} to estimate the CR Yamabe functional.   In \cite{JL}, the vanishing of these tensors are derived from \eqref{Qsym} for $k\le 4$ with some computations.  Our construction also simplifies this part of their  argument.

This paper is organized as follows.
In \S2, we quickly review the definition and fundamental facts on the Tanaka-Webster connection and pseudo-Einstein contact forms by following \cite{H, L}.
In  \S3, we recall the parabolic normal coordinates of \cite{JL} and modify them to $\bC^{n+1}$-valued coordinates, which give an approximate CR embedding.  With these coordinates, we give an algorithm for approximating polynomials in the coordinates by CR holomorphic functions; this will be used in the inductive construction of CR pluriharmonic normal scale.
We prove the theorem in \S4 by giving an inductive construction of the jets of pseudo-Einstein contact form.
 We follow \cite[\S3]{JL}  and emphasize on the key steps which need modifications.
The proof of the proposition, which may be obvious from the Einstein equations,  is given at the end.

\medskip

\noindent
{\em Notes.} In his Master's thesis \cite{K}, Satoshi Katsumi claimed that there is a pseudo-Einstein contact form for which the holomorphic derivatives of $\Scal$ and $A_{\alpha\beta}$ vanish at a point $p$:
\[
\Scal_{\alpha_{1}\dots\alpha_{k}}(p)=0,\quad 
A_{\alpha_{1}\alpha_{2},\alpha_{3}\dots\alpha_{k}}(p)=0\quad \text{for all }k.
\]
His argumnet is based on Moser's normal form;  the ambiguity of the normalized contact forms is yet to be studied.
\medskip

\noindent
{\em Notations.}  We adopt the following index conventions:

The lower case Greek indices $\alpha,\beta,\dots$ run though $\{1,2,\dots,n\}$.

The upper case Latin indices $I,J,\dots$ run though $\{0,1,2,\dots,n\}$. 
 
 For a list of indices, we use calligraphic fonts
$\calI=I_{1}\dots I_{k}$ and its length is denoted by $|\calI|=k$.  
The weight $\|\calI\|$ of a list of indices  is defined by
\[
\|I_{1}\dots I_{k}\|=\|I_{1}\|+\cdots+\|I_{k}\|,
\quad 
\|\alpha\|=1 \text{ and }\|0\|=2.
\]
We will also use the index notation of Einstein and Penrose.  Repeated indices are summed:
 \[
f_{\alpha}\th^{\alpha}=\sum_{\alpha=1}^{n} f_{\alpha}\th^{\alpha},
\quad
a_{I}z^{I}=\sum_{I=0}^{n} a_{I}z^{I}.
\]
For the (anti-) symmetrizations of indices, we use   $(\ \ )$ and $[\ \ ]$, e.g.
\[
f_{(\alpha\beta)}=\frac{1}{2}(f_{\alpha\beta}+f_{\beta\alpha}),\quad
f_{[\alpha\ol\beta]}=\frac{1}{2}(f_{\alpha\ol\beta}-f_{\ol\beta\alpha}).
\]
We use overlined indices to denote the conjugate of tensors, e.g.
\[Z_{\ol\alpha}=\ol{Z_{\alpha}}, \quad\th^{\ol\alpha}=\ol{\th^{\alpha}}, \quad\omega_{\ol\alpha}{}^{\ol\beta}=\ol{\omega_{\alpha}{}^{\beta}},
\quad A_{\ol\alpha\ol\beta}=\ol{A_{\alpha\beta}}.
\]

\section{The Tanaka-Webster connection and the \\ pseudo-Einstein condition}
A {\em CR manifold} is a real $(2n+1)$-dimensional manifold $M$ with a distinguished $n$-dimensional integrable
subbundle $T^{1,0}\subset \bC TM$ such that
$T^{1,0}\cap \ol{T^{1,0}}=\{0\}$. For example, if $M$ is real hypersurface in a complex manifold $X$, then $T^{1,0}=T^{1,0}X\cap \bC TM$ gives a structure of CR manifold. In this case, we say $M$ is {\em embeddable.}  
Let $H=\Re T^{1,0}\subset TM$ and take a real one form $\th$  such that $\ker\th=H$.  Then we can define the {\em Levi form} $L_{\th}(X,Y)=-id\th(X,\ol Y)$
for $X,Y\in T^{1,0}$.  If $L_{\th}$ is positive definite, we say $M$ is {\em strictly pseudoconvex}.
In such a case, $\th\wedge(d\th)^{n}\ne0$ and call $\th$ a (positive) {\em contact form.}
Note that any (positive) contact form is given by a scaling $\wh\th=e^{2\up}\th$, $\up\in C^{\infty}(M)$.

In the following, we always assume that $M$ is strictly pseudoconvex and $\th$ is a positive contact form on it.  The {\em Reeb vector field} $T$ for $\th$ is defined by the condition
\[
\th(T)=1,\quad  d\th(T,\cdot)=0.
\] 
We take a local frame $W_{\alpha}$ of $T^{1,0}$.
Then $W_{\alpha},W_{\ol\alpha}, T$ form a local frame of $\bC TM$.
Let $\th^{\alpha},\th^{\ol\alpha},\th$ be its dual coframe, called {\em admissible coframe},
for which 
\[
d\th=i h_{\alpha\ol\beta}\th^{\alpha}\wedge\th^{\ol\beta},
\]
for a positive definite hermitian matrix $h_{\alpha\ol\beta}$.
We will use $h_{\alpha\ol\beta}$ and its inverse $h^{\alpha\ol\beta}$ to lower and raise indices.

A choice of $\th$ determines a linear connection $\nabla$ on $\bC TM$, called the {\em Tanaka-Webster (TW) connection}.
In the frame, $W_{\alpha}, T$, we have $\nabla T=0$ and $\nabla W_{\alpha}=\omega_{\alpha}{}^{\beta}\otimes W_{\beta}$.  The connection from $\omega_{\alpha}{}^{\beta}$  is determined by the following  equations:
\begin{align*}
d\th^{\beta}=\th^{\alpha}\wedge\omega_{\alpha}{}^{\beta}+A^{\beta}{}_{\ol\alpha}\th\wedge\th^{\ol\alpha},\quad \omega_{\alpha\ol\beta}+\omega_{\ol\beta\alpha}=dh_{\alpha\ol\beta}.
\end{align*}
A part of the torsion of $\nabla$ is given by
$A_{\alpha\beta}$, which  is called the {\em TW torsion}. It is shown that 
$
A_{\alpha\beta}=A_{\beta\alpha}$.
The {\em TW curvature} $R_{\alpha\ol\beta\gamma\ol\sigma}$  is defined by the
$\th^{\gamma}\wedge\th^{\ol\sigma}$ part of the curvature form:
\[
d\omega_{\alpha}{}^{\beta}-\omega_{\alpha}{}^{\gamma}\wedge\omega_{\gamma}{}^{\beta}
=R_{\alpha}{}^{\beta}{}_{\gamma\ol\sigma}\th^{\gamma}\wedge\th^{\ol\sigma}\mod \th,\th^{\alpha}\wedge\th^{\beta}, 
\th^{\ol\alpha}\wedge\th^{\ol\beta}.
\]
Other components can be expressed in terms of the torsion.  We then define the {\em Ricci tensor} and the {\em scalar curvature} by
\[
\Ric_{\alpha\ol\beta}=R_{\alpha\ol\beta\gamma}{}^{\gamma},
\quad \Scal=\Ric_{\alpha}{}^{\alpha}.
\]
We will denote the covariant derivatives of a tensor by indices preceded by a comma, e.g.
$A_{\alpha\beta,\gamma}$.  Then a part of the Bianchi identities can be written as
$A_{\alpha\beta,\gamma}=A_{(\alpha\beta,\gamma)}$.  
For a scalar function, we will omit the comma.
For the covariant derivative in $T$, we use the index $0$.  So, a commutative relation of covariant derivative for a function $u$ can be written as
\begin{equation}
u_{\alpha\ol\beta}-u_{\ol\beta\alpha}=ih_{\alpha\ol\beta}u_{0}.
\end{equation}

Under the scaling $\wh\th=e^{2\up}\th$, we choose 
$\wh\th^{\alpha}=\th^{\alpha}+2i\up^{\alpha}\th$ as an admissible coframe.  Then we have
\begin{align*}
\wh A_{\alpha\beta}&=A_{\alpha\beta}+2i \up_{\alpha\beta}-4i \up_{\alpha}\up_{\beta},
\\
\wh\Scal&=e^{-2\up}\big(\Scal-2(n+1)(\Delta_{b}\up+2n\up_{\alpha}\up^{\alpha})\big),
\end{align*}
where $\Delta_{b}\up=\up_{\alpha}{}^{\alpha}+\up^{\alpha}{}_{\alpha}$.

A function $f$ is {\em CR holomorphic} if $f_{\ol\alpha}=0$.  A real valued function $u$ is {\em CR pluriharmonic} if $u$ can be locally written as the real part of a CR holomorphic function.  
We are particularly interested in the scaling by a CR pluriharmonic function $\up=\Re f$.  For a CR holomorphic $f$, we have
$f_{\ol\alpha}=0$ and $
\Delta_{b} f=in f_{0}$; hence the transformation laws above give
\begin{align}
\wh A_{\alpha\beta}&=A_{\alpha\beta}+i f_{\alpha\beta}-i f_{\alpha}f_{\beta},
\label{transAf}\\
\wh\Scal&=e^{-2\up}\big(\Scal-4n(n+1)(-\Im f_{0}+ f_{\alpha}f^{\alpha})\big).
\label{transSf}
\end{align}

Let us define {\em Einstein tensors} by
\begin{align*}
\Ein_{\alpha\conj\beta}&=\Ric_{\alpha\conj\beta}-\frac1n h_{\alpha\conj\beta}\Scal,
\\
\Ein_{\alpha}&= \Scal_{\alpha}-inA_{\alpha\beta,}{}^\beta.
\end{align*}
We say $\th$ is {\em pseudo-Einstein} if 
\begin{equation}\label{pEeq}
\Ein_{\alpha\conj\beta} =0
\quad\text{and}\quad
\Ein_{\alpha}=0.
\end{equation}
When $n\ge 2$, the first equation is the original definition of the pseudo-Einstein condition in \cite{L}.  Since
\begin{align*}
n\Ein_{\alpha\conj\beta,}{}^{\conj\beta}&=(n-1)\Ein_{\alpha},
\end{align*}
the second equation follows from the first.  
When $n=1$, the first equation is trivial and we only have $\Ein_{1}=0$. 
See \cite{CG, HMM} for more geometric aspects of the system \eqref{pEeq}.
We will use the following basic facts \cite{H, L}:
\begin{itemize}
\item[(1)]
If $M$ is embeddable, then a pseudo-Einstein contact form exists locally.
\item[(2)]
If $\th$ is pseudo-Einstein, then $\wh\th=e^{2\up}\th$ is pseudo-Einstein if and only if $\up$ is CR pluriharmonic.
\end{itemize}

\section{Parabolic normal coordinates} 
 Fix a point $p\in M$ and a contact form $\th$ on $M$.
 Using the splitting $TM=H\oplus \bR T$, we write a tangent vector at $p$
as
\[
W+c T,\quad W\in H_{p}, c\in\bR.
\] 
Then we consider a curve $\gamma$ satisfying the ordinary differential equation
\begin{equation}\label{parabolic-geodesic}
\nabla_{\dot\gamma}\dot\gamma=2c T, \quad \gamma(0)=p,\ \dot\gamma(0)=W.
\end{equation}
For  $W+c T$ near $0\in T_p M$, the solution $\gamma=\gamma_{W,c}$ exist on $[0,1]$ and we may define a map
\[
\Psi\colon T_{p}M\to M,\quad \Psi(W+cT)=\gamma_{W,c}(1),
\]
which is shown to be diffeomorphic near $0$.  If we fix an orthonormal frame $W_{\alpha}$ of $T^{1,0}_{p}M$,
we can give coordinates of $T_{p}M$ by $\bH^{n}=\bC^{n}\times\bR$,
\[
\bH^{n}\ni(z^{\alpha},t)\mapsto z^{\alpha}W_{\alpha}+z^{\ol\alpha}W_{\ol\alpha}+t T\in T_{p}M.
\]
Composing with $\Psi$, we obtain a local diffeomorphism
$
\bH^{n}\to M
$.  Its local inverse $M\to\bH^{n}$ defined near $p$ is called  {\em parabolic normal coordinates}.  By taking the parallel transport of $W_{\alpha}$ along each $\gamma$, we may define a local frame, called a {\em special frame}, which is also denoted by $W_{\alpha}$. 

On $\bH^{n}$, there is an action of $s\in\bR_{+}$ given by $\delta_{s}(z^{\alpha},t)=(s z^{\alpha},s^{2}t)$, called {\em  dilations}.  Its generator is
\[
X=z^{\alpha}\pa_{\alpha}+z^{\ol\alpha}\pa_{\ol\alpha}+2t\pa_{t},
\]
where $\pa_{\alpha}=\pa/\pa z^{\alpha}$.
Note that any tensor $\varphi$ defined near $p$ can be decomposed into homogenous parts with respect to the dilations:
\[
\varphi\sim\sum_{m}\varphi_{(m)}, \quad \calL_{X}\varphi_{(m)}=m\varphi_{(m)},
\]
where $\calL_{X}$ denotes the Lie derivative. Here the sum is understand in a formal sense and do not consider its convergence.  If $\varphi=\varphi_{(m)}$, we say that $\varphi$ has {\em weight} $m$. 

 We write $\varphi=\calO_{m}$ if
$\varphi_{(j)}=0$ for all $j<m$.  
On a CR manifold, we can also define the notion of $\calO_{m}$ at $p\in M$ via parabolic normal coordinates.  While the definition depends on the choice of contact form, it is easy to check that $\calO_{m}$ depends only on the contact bundle $H$.  In fact, for a function, $f=\calO_{m}$ if and only if
\[
X_{1}\cdots X_{l}f(p)=0 \ \text{ for any }l<m\text{ and }  X_{j}\in\Gamma(H).
\]
As usual, we write $f=\calO_{\infty}$ if $f=\calO_{m}$ for any $m$.

On $\bH^{n}$, the standard contact form is given by
\[
\th=dt-\frac{i}{2}(z_{\alpha}dz^{\alpha}+z^{\alpha}dz_{\alpha}),
\]
where $z_{\alpha}=\ol{z^{\alpha}}$, the index is lowered by $\delta_{\alpha\ol\beta}$.
Then $d\th=idz^{\alpha}\wedge dz_{\alpha}$ and the Reeb vector field is $T=\pa_{t}$.
The bundle $T^{1,0}$  is defined by the frame
\[
Z_\alpha
=\pa_\alpha+\frac{i}{2}z_\alpha\pa_t,
\]
which is orthonormal for $d\th$.  We also set $Z_{0}=-i T$ and $z^{0}=-|z|^{2}/2+it$, where $|z|^{2}=z^{\alpha}z_{\alpha}$.  (The factor ``$-i$'' in $Z_{0}$ makes computations simpler as $Z_{0}z^{0}=1$.)
Then $Z_{\ol\alpha}z^{0}=0$ and we call $z^{I}$ the {\em complex coordinates} of $\bH^{n}$,
which give an embedding of $\bH^{n}$ into $\{2\Re z^{0}+|z|^{2}=0\}\subset\bC^{n+1}$.

If we use the notation $
Z_{I_{1}\dots I_{k}}=Z_{I_{1}}\cdots Z_{I_{k}}$, we have
\[
Z_{[\alpha\beta]}=0,\quad 2Z_{[\alpha\ol\beta]}=\delta_{\alpha\ol\beta}Z_{0},\quad 
Z_{[\alpha0]}=0. 
\]
It is clear that $z^{I}$ (and $dz^{I}$) has weight $\|I\|$ and $Z_{I}$ has weight $-\|I\|$.
A monomial
\[
z^{\calI}=z^{I_{1}}\cdots z^{I_{k}}
\]
has weight $\|\calI\|$. We now show that a CR holomorphic function has Taylor series in powers of $z^{I}$.

\begin{lem}
Let $f$ be a CR holomorphic function defined near $0\in\bH^{n}$. Then, for any
$m$, the weight $m$ part of $f$ is given by
\[
f_{(m)}=\sum_{\|\calI\|=m}\frac{1}{|\calI|!}z^{\calI}Z_{\calI}f(0).
\]
\end{lem}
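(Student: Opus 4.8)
The plan is to pass to weight-homogeneous parts and prove a reproducing identity there. First I would decompose $f\sim\sum_m f_{(m)}$ into weight-homogeneous parts. Since $Z_{\ol\alpha}$ is homogeneous of weight $-1$, the hypothesis $Z_{\ol\alpha}f=0$ separates by weight into $Z_{\ol\alpha}f_{(m)}=0$, so every $f_{(m)}$ is a weight-$m$ CR holomorphic polynomial in $z^\alpha,z_\alpha,t$. Because $Z_\calI$ is homogeneous of weight $-\|\calI\|$, for $\|\calI\|=m$ the function $Z_\calI f_{(m')}$ has weight $m'-m$ and hence vanishes at $0$ unless $m'=m$; thus $(Z_\calI f)(0)=(Z_\calI f_{(m)})(0)$. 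It therefore suffices to prove, for every weight-$m$ CR holomorphic polynomial $p$, the identity
\[
p=\sum_{\|\calI\|=m}\frac{1}{|\calI|!}\,z^\calI\,(Z_\calI p)(0),
\]
whose right-hand side I abbreviate $P_m p$.

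Next I would record two algebraic facts. From $Z_{[\alpha\beta]}=0$ and $Z_{[\alpha0]}=0$ the operators $Z_0,Z_1,\dots,Z_n$ mutually commute, so $Z_\calI$ depends only on the multiset of indices in $\calI$. Each $Z_I$ is a derivation and satisfies $Z_J z^I=\delta^I_J$ up to an antiholomorphic correction, the only correction being $Z_\alpha z^0=-z_\alpha$; moreover $Z_J z_\beta=0$ for every $J\in\{0,\dots,n\}$ and $z_\beta(0)=0$. Consequently, in the Leibniz expansion of $(Z_\calJ z^\calI)(0)$ any branch that uses the correction term produces an antiholomorphic factor $z_\beta$ that can never be differentiated away and dies at $0$; only the diagonal matchings $Z_\alpha z^\alpha=1$ and $Z_0 z^0=1$ survive. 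This gives the pairing $(Z_\calJ z^\calI)(0)=\prod_I m_I!$ when $\calJ$ and $\calI$ carry the same index multiset $(m_I)$, and $0$ otherwise.

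With the pairing in hand, set $q=p-P_m p$, again a weight-$m$ CR holomorphic polynomial. For $\|\calJ\|=m$ I would evaluate $(Z_\calJ P_m p)(0)$: only the $z^\calI$ sharing the multiset of $\calJ$ contribute; there are $|\calJ|!/\prod_I m_I!$ ordered lists $\calI$ with that multiset, each contributing $\prod_I m_I!$ from the pairing, and each satisfying $(Z_\calI p)(0)=(Z_\calJ p)(0)$ because the $Z_I$ commute. The combinatorial factors cancel the $1/|\calI|!$ and leave exactly $(Z_\calJ p)(0)$. Hence $(Z_\calJ q)(0)=0$ for all $\|\calJ\|=m$, and trivially for all other weights by homogeneity.

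It remains to prove the injectivity statement: a weight-$m$ CR holomorphic polynomial $q$ with $(Z_\calJ q)(0)=0$ for all $\calJ$ is zero. I would argue by induction on $m$. Since $[Z_0,Z_{\ol\beta}]=0$, the function $Z_0 q$ is again CR holomorphic, of weight $m-2$, and $(Z_\calJ Z_0 q)(0)=(Z_{\calJ 0}q)(0)=0$; by induction (the lower-weight, in particular negative-weight, cases being immediate) $Z_0 q=0$, i.e. $q$ is independent of $t$. The CR equation then reduces to $\pa_{\ol\alpha}q=0$, so $q$ is a homogeneous holomorphic polynomial in $z^\alpha$, and the vanishing of all $(Z_\calJ q)(0)=(\pa_\calJ q)(0)$ forces $q=0$. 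Applying this to $q=p-P_m p$ yields $p=P_m p$, proving the lemma. I expect the main obstacle to be the pairing computation of the second paragraph — specifically the clean argument that the antiholomorphic corrections never contribute at $0$ — together with keeping the noncommutativity of $Z_\alpha$ with $Z_{\ol\beta}$ out of the way in the injectivity induction, which is why the reduction is routed through $Z_0$ rather than through a general $Z_I$.
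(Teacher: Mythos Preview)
Your proof is correct and takes a genuinely different route from the paper. The paper identifies $\bH^n$ with the boundary of the Siegel domain in $\bC^{n+1}$, invokes a one-sided holomorphic extension $\wt f$ of $f$ into the domain, and then checks that $Z_\calI f(0)=\pa_\calI\wt f(0)$ (using that $Z_\alpha$ extends to $\pa_\alpha+z_\alpha\pa_0$ and that $Z_0$ agrees with $\pa_0$ on holomorphic functions), thereby reducing the claim to the ordinary Taylor formula for holomorphic functions on $\bC^{n+1}$. Your argument stays intrinsically on $\bH^n$: you establish the reproducing identity $p=P_m p$ for weight-$m$ CR holomorphic polynomials by computing the pairing $(Z_\calJ z^\calI)(0)$ explicitly and then proving an injectivity statement via induction through $Z_0$. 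The paper's approach is much shorter but leans on the holomorphic extension; yours is more elementary and self-contained, at the cost of the combinatorial bookkeeping. Your handling of the correction term $Z_\alpha z^0=-z_\alpha$ (noting that $Z_J z_\beta=0$ for all $J$, so such factors persist and die at $0$) and your routing of the induction through $Z_0$ to keep $Z_{\ol\beta}$ out of the picture are both correct and are exactly the points where a careless argument would go wrong.
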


\begin{proof}
Identifying $\bH^{n}$ with the boundary of Siegel domain 
\[
\Omega=\{(z^{\alpha},z^{0})\in\bC^{n+1}:2\Re z^{0}+|z|^{2}<0\},
\]
 we take a function $\wt f(z^{\alpha},z^{0})$ defined near $0\in\bC^{n+1}$ which is holomorphic in $\Omega$ such that
$\wt f(z^{\alpha},-|z|^{2}/2+it)=f(z^{\alpha},t)$. Writing the Taylor series of $\wt f$ at $0\in\bC^{n+1}$ in the form
\[
\wt f(z^{\alpha},z^{0})\sim \sum_{\calI}\frac{1}{|\calI|!}z^{\calI}\pa_{\calI}\wt f(0),
\]
we will show $
Z_{\calI}f(0)=\pa_{\calI}\wt f(0).
$
Since $Z_{\alpha}$ can be extended to $\pa_{\alpha}+z_{\alpha}\pa_{0}$ on $\bC^{n+1}$ and $z_{\alpha}$ commutes with $\pa_{I}$, we may replace $Z_{\alpha}$ by $\pa_{\alpha}$ when evaluated at $0$.  
For a holomorphic function, we have $\pa_{0}\wt f=-i\pa_{t}\wt f$ and hence $Z_{0}$ can be replaced by $\pa_{0}$.
\end{proof}

Now we consider parabolic normal coordinates $(z^{\alpha}, t)$ centered at $p\in M$
and set 
\[
z^{0}=-|z|^{2}/2+it.
\]  We call $z^{I}=(z^{\alpha},z^{0})$ the {\em complex parabolic coordinates}. With the coordinates, we can also define $Z_{\alpha}$ and $Z_{0}=-i\pa_{t}$ on $M$. The differences between $Z_{I}$ and the special frame $W_{I}$ (we set $W_{0}=-iT$) have  estimate (\cite[Proposition 2.5]{JL})
\[
W_{\alpha}=Z_{\alpha}+\calO_{1}, \quad
W_{0}=Z_{0}+\calO_{0}.
\]
It follows that the coordinates  $z^{I}$ satisfy
$W_{\ol\alpha}\,z^{I}=\calO_{\|I\|+1}.$
Hence, for a monomial, the Leibniz rule gives
\begin{equation}\label{monomial-estimate}
W_{\ol\alpha}\,z^{\calI}=\calO_{\|\calI\|+1}.
\end{equation}
While a polynomial in $z^{I}$ may not be CR holomorphic, we can approximate it by a CR holomorphic function as follows.

\begin{lem}\label{correction-lem}
Let $f$ be a function satisfying $W_{\ol\alpha}f=\calO_{m}$. Then there exists a function $g= \calO_{m+1}$ such that
$
W_{\ol\alpha}(f+g)=\calO_\infty$.
Moreover, if $M$ is embeddable,  there is a CR holomorphic function $f_{m}$ such that
$f_{m}=f+\calO_{m+1}$.
\end{lem}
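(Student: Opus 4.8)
The plan is to build $g$ by a descent on the weight, reducing each stage to a constant-coefficient equation on the Heisenberg group, and then, in the embeddable case, to upgrade the resulting formally CR holomorphic function to an honest one. Two facts drive the induction. First, conjugating the estimates $W_\alpha=Z_\alpha+\calO_1$, $W_0=Z_0+\calO_0$ of \cite{JL} gives $W_{\ol\alpha}=Z_{\ol\alpha}+E_{\ol\alpha}$, where $Z_{\ol\alpha}$ has weight $-1$ and the error $E_{\ol\alpha}$ raises weight by at least one; this is consistent with $W_{\ol\alpha}z^{I}=\calO_{\|I\|+1}$ and $Z_{\ol\alpha}z^{I}=0$. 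Second, a direct computation in the coordinates $(z^{\beta},z_{\beta},z^{0})$ yields the clean identity $Z_{\ol\alpha}=\pa/\pa z_{\alpha}$: the operator $Z_{\ol\alpha}$ annihilates $z^{\beta}$ and $z^{0}$ and satisfies $Z_{\ol\alpha}z_{\beta}=\delta_{\alpha\beta}$, so on homogeneous polynomials it is literally differentiation in the anti-holomorphic variables $z_{\alpha}$, with $z^{\beta},z^{0}$ as parameters.

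Now suppose $W_{\ol\alpha}f=\calO_{m}$ and set $\psi_{\ol\alpha}=-(W_{\ol\alpha}f)_{(m)}$, a homogeneous polynomial of weight $m$. For a weight-$(m+1)$ correction $g_{(m+1)}$ the leading (weight $m$) part of $W_{\ol\alpha}(f+g_{(m+1)})$ is $(W_{\ol\alpha}f)_{(m)}+Z_{\ol\alpha}g_{(m+1)}$, since $E_{\ol\alpha}g_{(m+1)}=\calO_{m+2}$; so I must solve $Z_{\ol\alpha}g_{(m+1)}=\psi_{\ol\alpha}$ simultaneously in $\alpha$. By the identity above this is $\pa g/\pa z_{\alpha}=\psi_{\ol\alpha}$, solved by the Poincar\'e lemma in the variables $z_{\alpha}$ (the homotopy integral returns a weight-$(m+1)$ polynomial) provided the closedness $Z_{[\ol\alpha}\psi_{\ol\beta]}=0$ holds. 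This is where the CR structure enters: integrability of $\ol{T^{1,0}}$ gives $[W_{\ol\alpha},W_{\ol\beta}]=c^{\ol\gamma}{}_{\ol\alpha\ol\beta}W_{\ol\gamma}$, and since $[Z_{\ol\alpha},Z_{\ol\beta}]=0$ the structure functions satisfy $c^{\ol\gamma}{}_{\ol\alpha\ol\beta}=\calO_{1}$; hence $W_{\ol\alpha}W_{\ol\beta}f-W_{\ol\beta}W_{\ol\alpha}f=c^{\ol\gamma}{}_{\ol\alpha\ol\beta}W_{\ol\gamma}f=\calO_{m+1}$, whose weight-$(m-1)$ part reads $Z_{\ol\beta}\psi_{\ol\alpha}-Z_{\ol\alpha}\psi_{\ol\beta}=0$. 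Replacing $f$ by $f+g_{(m+1)}$ improves the estimate to $\calO_{m+1}$; iterating and realizing the formal series $\sum_{k>m}g_{(k)}$ by Borel's lemma produces an actual function $g=\calO_{m+1}$ with $W_{\ol\alpha}(f+g)=\calO_{\infty}$.

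For the embeddable case, put $\wt f=f+g$, so $W_{\ol\alpha}\wt f=\calO_{\infty}$ and $\wt f$ is formally CR holomorphic. Embeddability lets me take ambient holomorphic coordinates whose restrictions are CR holomorphic functions $\zeta^{I}=z^{I}+\calO_{\|I\|+1}$; their products $\prod_{j}\zeta^{I_{j}}=z^{\calI}+\calO_{\|\calI\|+1}$ are CR holomorphic realizations of the monomials. I then match $\wt f$ weight by weight: given a CR holomorphic $h$ with $h-\wt f=\calO_{k}$, the weight-$k$ remainder $r=(\wt f-h)_{(k)}$ satisfies $Z_{\ol\alpha}r=0$, hence is a polynomial in $z^{\beta},z^{0}$, i.e.\ $r=\sum_{\|\calI\|=k}b_{\calI}z^{\calI}$; adding $\sum_{\|\calI\|=k}b_{\calI}\prod_{j}\zeta^{I_{j}}$—whose monomial errors are now genuinely $\calO_{k+1}$ because every $\calI$ has weight exactly $k$—improves the match to $\calO_{k+1}$. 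After the finitely many steps $k=0,\dots,m$ I obtain a CR holomorphic $f_{m}$ with $f_{m}-\wt f=\calO_{m+1}$, so $f_{m}=f+\calO_{m+1}$.

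I expect the inductive step to be the main obstacle: solving $Z_{\ol\alpha}g=\psi_{\ol\alpha}$ with a \emph{common} $g$ forces me to verify the closedness $Z_{[\ol\alpha}\psi_{\ol\beta]}=0$, and the whole argument hinges on extracting this from integrability together with the weight bookkeeping $c^{\ol\gamma}{}_{\ol\alpha\ol\beta}=\calO_{1}$. Once the model operator is identified with $\pa/\pa z_{\alpha}$, the solvability itself is the elementary Poincar\'e lemma, and the embeddable upgrade is comparatively soft.
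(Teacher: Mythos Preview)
Your proof is correct, but it follows a genuinely different route from the paper's in both halves of the lemma.

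For the first part, the paper writes down an explicit correction
\[
g_{1}=\sum_{l=1}^{m+1}\frac{(-1)^{l}}{l!}\,z^{\ol\alpha_{1}}\cdots z^{\ol\alpha_{l}}\,W_{\ol\alpha_{1}\cdots\ol\alpha_{l}}f,
\]
checks directly (using integrability in the form $W_{[\ol\alpha\ol\beta]}=\varphi_{\ol\alpha\ol\beta}^{\ol\gamma}W_{\ol\gamma}$) that $W_{\ol\alpha_{0}}(f+g_{1})=\calO_{m+1}$, and iterates. You instead linearize at each step to the model equation $Z_{\ol\alpha}g_{(m+1)}=\psi_{\ol\alpha}$ on the Heisenberg group, recognize $Z_{\ol\alpha}=\partial/\partial z_{\alpha}$ in the coordinates $(z^{\beta},z_{\beta},z^{0})$, and invoke the Poincar\'e lemma after verifying the closedness $Z_{[\ol\alpha}\psi_{\ol\beta]}=0$ via integrability and the weight estimate on the structure functions. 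The paper's formula is constructive and avoids any solvability discussion; your approach is more conceptual and makes transparent exactly where integrability is used (it is the compatibility condition for the overdetermined system).

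For the embeddable statement, the paper simply Taylor-expands $f$ in ambient holomorphic coordinates $(z^{\alpha},w)$ (with $w=\calO_{2}$) and observes that $W_{\ol\alpha}f_{m}=\calO_{m}$ forces all anti-holomorphic coefficients to vanish, so the truncation is already a holomorphic polynomial. You instead pass through the formally CR holomorphic $\wt f=f+g$ and match it weight by weight against CR holomorphic monomials $\prod_{j}\zeta^{I_{j}}$. This works, but note that your assertion $\zeta^{0}=z^{0}+\calO_{3}$ is not quite free: the restricted ambient coordinate $w$ is only $\calO_{2}$ a priori, and one must subtract a quadratic polynomial in the $\zeta^{\alpha}$ (using $W_{0}w(p)=1$ to see the $z^{0}$--coefficient is nonzero) to obtain the stated normalization. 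The paper's argument sidesteps this by never comparing the ambient coordinates to the parabolic $z^{0}$.
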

\begin{proof}
We set
\[
g_{1}=\sum_{l=1}^{m+1}
\frac{(-1)^{l}}{l!}z^{\ol\alpha_{1}}\cdots z^{\ol\alpha_{l}}
W_{\ol\alpha_{1}\cdots\ol\alpha_{l}}f.
\]
The summands are $\calO_{m+1}$ as $z^{\ol\alpha_{1}}\cdots z^{\ol\alpha_{l}}W_{\ol\alpha_{1}\dots \ol\alpha_{l-1}}=\calO_{1}$ and $W_{\ol\alpha_{l}}f=\calO_{m}$.
Using $W_{\ol\alpha}z^{\ol\beta}=\delta_{\ol\alpha}^{\ol\beta}+\calO_{2}$ and the integrability condition
$
W_{[\ol\alpha\ol\beta]}=\varphi_{\ol\alpha\ol\beta}^{\ol\gamma}W_{\ol\gamma},
$
we get
\[
W_{\ol\alpha_{0}}(f+ g_{1})=\frac{(-1)^{m+1}}{(m+1)!} z^{\ol\alpha_{1}\cdots \ol\alpha_{m+1}}
W_{\ol\alpha_{0}\ol\alpha_{1}\dots\ol\alpha_{m+1}}f+\calO_{m+1},
\]
which is also $\calO_{m+1}$ as the derivatives of $f$ are in $\calO_{0}$.  Repeating this procedure, we can find $g_{k}=\calO_{m+k}$ such that
$f_{k}=f+g_{1}+\cdots+g_{k}$ satisfies $W_{\ol\alpha}f_{k}\in\calO_{m+k}$.  Then $\sum_{k}g_{k}$
defines a formal power series at $p$ and Borel's lemma gives the required $g$.

Now we consider the case $M$ is realized, near $p=0$, as a surface 
\[
\rho=2u+|z|^{2}+F(z,\ol z, t)=0,
\]
where $(z^{\alpha},w)=(z^{\alpha},u+it)\in\bC^{n+1}$ and $F$ vanishes to the 3rd order at $0$.  Moreover, by linear coordinates change, we can also assume that $W_{\alpha}z^{\beta}=\delta_{\alpha}^{\beta}$ at $0$. By restriction, we regard $(z^{\alpha},w)$ as functions on $M$.  Since the contact form is $\calO_{2}$ and
\[
\th=-\frac{i}{2}(\pa-\ol\pa)\rho=dt+\calO_{2},
\] we have $t=\calO_{2}$.  On the other hand, from the equation $2u=-|z|^{2}-F$ on $M$, we  have $u=\calO_{2}$ and thus $w=\calO_{2}$.

With these holomorphic coordinates  $(z^{\alpha},w)$ (which are, in general, different from the parabolic coordinates),  
we consider the Taylor expansion of $f$ up to $\calO_{m+1}$:
\[
f_{m}=\sum_{ p+q+2l\le m}
C^{l}_{\alpha_{1}\dots\alpha_{p}\ol\beta_{1}\dots\ol\beta_{q}}z^{\alpha_{1}\dots\alpha_{p}}z^{\ol\beta_{1}\dots\ol\beta_{q}}w^{l}.
\]
Since $W_{\ol\alpha}\calO_{m+1}\subset\calO_{m}$,
the assumption $W_{\ol\alpha}f=\calO_{m}$ froces $W_{\ol\alpha}f_{m}=\calO_{m}$. 
But, by $W_{\ol\alpha}z^{\ol\beta}=\delta_{\alpha}^{\beta}+\calO_{1}$, we see that 
 $f_{m}$ can not contain $z^{\ol\beta}$.  Thus $f_{m}$ is a holomorphic polynomial. 
\end{proof}

If we apply this lemma to complex parabolic coordinates $z^{I}$, we obtain $\wt z^{I}$ such that
\[
\wt z^{\alpha}=z^{\alpha}+\calO_{3}, \quad \wt z^{0}=z^{0}+\calO_{4}\quad\text{and}\quad
W_{\ol\alpha}\wt z^{I}=\calO_{\infty}. 
\]
The algorithm for constructing $\wt z^{I}$ is given in the lemma and we can write down the Taylor expansion of $\wt z^{I}$ in the coordinates $(z^{\alpha},t)$ in terms of the jets of $R_{\alpha\ol\beta\gamma\ol\sigma}$ and $A_{\alpha\beta}$; see \cite[Proposition 2.5]{JL}.

\begin{rem}\label{remark}
The coordinates $\wt z^{I}$ give a $C^{\infty}$ embedding of $M$ into $\bC^{n+1}$ as a surface
\[
\wt M=\{(z,u+iv)\in\bC^{n+1}:
2u+|z|^{2}+F(z,\ol z, v)=0\}
\]
and the $\infty$-jets of CR structure at $p$ is isomorphic to that for  $\wt M$ at $0$.
So, the computation depending only on the jets of CR structure at a point $p\in M$ can be done equivalently for 
$\wt M$ at $0$.  For such calculations, we lose no generality by assuming the CR embeddability.  
For example, we obtain  a formally pseudo-Einstein contact form at $p$ from the pull-back of the one on $\wt M$ and the the statement (2) in the end of \S2 also holds for {\em formally CR pluriharmonic functions}, which are real parts of functions $f$ such that $W_{\ol\alpha}f=\calO_{\infty}$.
\end{rem}

\section{Proof of the theorem}
The proof is done by the induction on the weight $m$. For each step, we prove the following

\begin{thm}
Let $\th_{0}$ be pseudo-Einstein contact from defined near $p\in M$.
For any $m\ge2$, there is a CR pluriharmonic function $\up$ such that
the  scalar curvature and the torsion for $\th=e^{2\up}\th_{0}$ satisfies  
\begin{equation}\label{torsion-normalization}
\Scal(p)=0\quad\text{and}\quad
 A_{(I_{1}I_{2},I_{3}\cdots I_{k})}(p)=0,\quad \|I_{1}\dots I_{k}\|\le m.
 \end{equation}
Moreover,  $\up(p)$ and $\up_{I}(p)$ can be chosen arbitrary and, once they are fixed, 
$\up$ modulo $\calO_{m+1}$ is uniquely determined.
\end{thm}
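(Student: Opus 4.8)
\emph{The plan} is to reduce to the embeddable case and then build the jets of $\up$ one weight at a time by induction on $m$. By Remark~\ref{remark} I may assume $M$ is embedded, so that genuine CR holomorphic functions and complex parabolic coordinates $z^{I}$ are available, and a formally CR pluriharmonic $\up'=\Re f'$ keeps the contact form pseudo-Einstein by the second fact recorded at the end of \S2. The base case is $m=2$ starting from $\th_{0}$ itself (there are no constraints of weight $<2$). For the inductive step, suppose a pseudo-Einstein $\th$ has been produced with $\Scal(p)=0$ and $A_{(\calI)}(p):=A_{(I_{1}I_{2},I_{3}\cdots I_{k})}(p)=0$ for every symmetric list with $\|\calI\|\le m-1$, uniquely determined modulo $\calO_{m}$ once the first jet is fixed. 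I would then seek the next correction as $\wt\th=e^{2\up'}\th$ with $\up'=\Re f'$ and $f'=\calO_{m}$ CR holomorphic, so that only the weight-$m$ part of $f'$ is undetermined. By the weight grading together with $f'=\calO_{m}$, such a scaling leaves every $A_{(\calK)}(p)$ with $\|\calK\|<m$ and $\Scal(p)$ unchanged, so the lower-weight normalization persists and only the weight-$m$ conditions remain.

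To arrange the weight-$m$ conditions I would linearize the transformation laws \eqref{transAf} and \eqref{transSf} in $f'$. Since $f'=\calO_{m}$, the quadratic terms $-if'_{\alpha}f'_{\beta}$ and $f'_{\alpha}f'{}^{\alpha}$ are $\calO_{2m-2}$ and contribute nothing at weight $m$; thus each individual step is genuinely \emph{linear} in $f'$, with the accumulated nonlinearity absorbed into the already-fixed base tensor $A_{IJ}$ of the current $\th$. The first task is to upgrade \eqref{transAf} to a uniform law for the extended torsion $A_{IJ}$, $I,J\in\{0,\dots,n\}$, of the schematic form $\wt A_{IJ}=A_{IJ}+i f'_{IJ}+(\text{terms of lower weight in }f')$, where $f'_{IJ}$ is the symmetric second covariant derivative in the extended index range. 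Here I would use the defining combinations $A_{\alpha 0}=\frac{-i}{n+1}A_{\alpha\beta,}{}^{\beta}$ and $A_{00}=\frac{-1}{n(n+1)}A_{\alpha\beta,}{}^{\alpha\beta}$, the Bianchi identity $A_{\alpha\beta,\gamma}=A_{(\alpha\beta,\gamma)}$, the commutation relation for covariant derivatives, the holomorphicity $f'_{\ol\alpha}=0$, and the pseudo-Einstein equations \eqref{pEeq} to keep $\Ric$ and the gradient of $\Scal$ slaved to $\Scal$ and $A$. Taking symmetrized covariant derivatives and evaluating at $p$, the expansion lemma of \S3 then identifies the leading contribution $i f'_{(\calI)}(p)$ with the holomorphic jet $i\,Z_{\calI}f'(0)=i f'_{\calI}(0)$.

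Granting this, the weight-$m$ system reads $A_{(\calI)}(p)+i f'_{\calI}(0)+(\text{known lower-weight data})=0$ for every symmetric $\calI$ of length $\ge2$ with $\|\calI\|=m$, together with (only at $m=2$) the scalar equation from \eqref{transSf}. By the expansion lemma, the weight-$m$ part of $f'$ is exactly parametrized by $\{f'_{\calJ}(0):\|\calJ\|=m\}$ over symmetric multi-indices. For $m\ge3$ every such $\calJ$ has length $\ge2$, and $f'_{\calI}(0)\mapsto i f'_{\calI}(0)$ is a bijection onto the constraint space, yielding a unique solution with no residual freedom. For $m=2$ the length-one datum $f'_{0}(0)$ enters: its real part is the prescribed Reeb component $\up'_{0}(p)$ of the first jet, while $\Im f'_{0}(0)$ is fixed uniquely by $\Scal(p)=0$ through the coefficient $4n(n+1)$, and the data $f'_{\alpha\beta}(0)$ are fixed by $A_{\alpha\beta}(p)=0$. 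This matches precisely the freedom $\bR_{+}\times\bH^{n}$ carried by the first jet $(\up(p),\up_{\alpha}(p),\up_{0}(p))$ and gives existence and uniqueness modulo $\calO_{m+1}$ at each weight. Assembling the weight-$m$ parts and invoking Borel's lemma, exactly as in the proof of Lemma~\ref{correction-lem}, produces the desired $\up$.

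The main obstacle is the second paragraph: verifying that the transformation of the \emph{derived} components $A_{\alpha 0}$ and $A_{00}$ retains the clean leading term $i f'_{IJ}$ with nonzero coefficient. Unlike $A_{\alpha\beta}$, whose law \eqref{transAf} is exact, these components involve covariant divergences of $\wt A_{\alpha\beta}$ and therefore pick up connection and curvature corrections when the Tanaka--Webster connection is recomputed for $\wt\th$. The heart of the argument is to show that every such correction is either nonlinear in $f'$ or of weight $<m$ once $f'=\calO_{m}$, so that the surviving linear term is precisely $i f'_{\calI}(0)$; this is the coefficient bookkeeping of \cite[\S3]{JL}, here substantially lightened by the pluriharmonicity of $f'$ (which kills the $f'_{\ol\alpha}$-terms) and by the pseudo-Einstein constraints (which remove the independent Ricci and $\Scal$-gradient terms).
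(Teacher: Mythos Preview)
Your approach is essentially the paper's: induction on the weight $m$, a base step at $m=2$ fixing $\Scal(p)$ and $A_{\alpha\beta}(p)$ via the free data $(\up(p),\up_\alpha(p),\up_0(p))$ and $\Im f_0(p)$, and an inductive step in which the linearized transformation of $A_{IJ}$ under a CR holomorphic $f=\calO_m$ determines the weight-$m$ jet of $f$ uniquely. The ``main obstacle'' you flag is precisely the content of Lemma~\ref{variationAf}, where the leading coefficient turns out to be $i^{\|IJ\|-1}$ (not uniformly $i$), giving $\wh A_{\calI}=A_{\calI}+i^{\,m-|\calI|+1}Z_{\calI}f+\calO_2$ after differentiation; note also that Borel's lemma is not needed for this fixed-$m$ statement, only for the passage to the main theorem.
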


Here we have assumed the existence of a pseudo-Einstein contact form in order to avoid the term ``formally''.
As we explained in Remark \ref{remark},  we will lose no generality by this change.
 
The first step is the case $m=2$.   Setting $2\Re \up=f+\ol f$ for a CR holomorphic function
with $f(0)\in\bR$,
we see that  $\up(p)$, $\up_{\alpha}(p)$ and $\up_0(p)$ correspond to $f(p)$, $f_{\alpha}(p)$ and
$\Re f_{0}(p)$ up to factors of $2$.

\begin{lem}
For given $(f(p), f_{\alpha}(p), \Re f_{0}(p))\in\bR\times\bC^{n}\times\bR$, one may take a CR holomorphic $f$ so that $\th=e^{2\Re f}\th_{0}$ satisfies 
\begin{equation}\label{first-step}
\Scal(p)=0,\quad  A_{\alpha\beta}(p)=0.
\end{equation}
Moreover, such $f$ is unique modulo $\calO_{3}$.
\end{lem}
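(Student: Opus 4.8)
The plan is to read off the two normalizations directly from the scaling laws \eqref{transAf} and \eqref{transSf} evaluated at $p$, and then to check that they pin down exactly the part of the weight $\le 2$ holomorphic jet of $f$ that is left free by the prescribed first-jet data. Throughout, I will let the unhatted curvature and torsion symbols refer to $\th_{0}$, and impose the conditions ``$\Scal(p)=0$, $A_{\alpha\beta}(p)=0$'' on the scaled form $\th=e^{2\Re f}\th_{0}$.

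First I would dispose of the torsion. Evaluating \eqref{transAf} at $p$, the vanishing of the torsion of $\th$ at $p$ becomes
\[
i\,f_{\alpha\beta}(p)=-A_{\alpha\beta}(p)+i\,f_{\alpha}(p)f_{\beta}(p).
\]
The right-hand side is symmetric in $\alpha,\beta$ and involves only the torsion of $\th_{0}$ and the prescribed datum $f_{\alpha}(p)$; hence it determines the symmetric tensor $f_{\alpha\beta}(p)$ uniquely and imposes no condition on the prescribed data. Next I would handle the scalar curvature. Since $e^{-2\up(p)}\ne 0$, the requirement $\Scal(p)=0$ for $\th$ reduces via \eqref{transSf} to
\[
\Scal(p)=4n(n+1)\bigl(-\Im f_{0}(p)+f_{\alpha}f^{\alpha}(p)\bigr).
\]
Here $f_{\alpha}f^{\alpha}(p)$ involves only the prescribed $f_{\alpha}(p)$, so this is a single real affine equation for the single real unknown $\Im f_{0}(p)$, with nonvanishing coefficient $4n(n+1)$; it has a unique solution and again constrains none of the prescribed data.

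It remains to assemble these into a CR holomorphic $f$ and to verify uniqueness. By the Taylor-series description of CR holomorphic functions in powers of $z^{I}$ (the first lemma of \S3), a CR holomorphic function modulo $\calO_{3}$ is specified exactly by its weight $\le 2$ holomorphic jet, namely $f(p)$, $f_{\alpha}(p)$, the symmetric $f_{\alpha\beta}(p)$, and $f_{0}(p)$. The prescribed data fix $f(p)$, $f_{\alpha}(p)$ and $\Re f_{0}(p)$, while the two displayed equations fix $f_{\alpha\beta}(p)$ and $\Im f_{0}(p)$, hence all of $f_{0}(p)$; so every weight $\le 2$ holomorphic jet is determined. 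For existence I would realize this jet by a polynomial in the CR holomorphic coordinates $\wt z^{I}$ produced after Lemma \ref{correction-lem}, e.g.
\[
f=f(p)+f_{\alpha}(p)\,\wt z^{\alpha}+\tfrac12 f_{\alpha\beta}(p)\,\wt z^{\alpha}\wt z^{\beta}+f_{0}(p)\,\wt z^{0},
\]
which is CR holomorphic modulo $\calO_{\infty}$; in the non-embeddable case I would pass to $\wt M$ and to formally CR pluriharmonic scales as in Remark \ref{remark}. Uniqueness modulo $\calO_{3}$ is then immediate, since any two admissible $f$ share the same weight $\le 2$ holomorphic jet. Finally, as $\up=\Re f$ is (formally) CR pluriharmonic and $\th_{0}$ is pseudo-Einstein, fact (2) of \S2 keeps $\th=e^{2\Re f}\th_{0}$ pseudo-Einstein.

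The step needing the most care is not the algebra but the \emph{decoupling} that makes the count work: one must confirm that the torsion condition involves the unknown $f_{\alpha\beta}(p)$ linearly and invertibly, that the scalar condition involves only $\Im f_{0}(p)$, and that the prescribed triple $(f(p),f_{\alpha}(p),\Re f_{0}(p))$ genuinely drops out of both. This is exactly the matching of the $\binom{n+1}{2}$ complex plus one real unknowns against the $\binom{n+1}{2}$ complex torsion components and the single real scalar condition.
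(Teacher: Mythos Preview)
Your argument is correct and follows essentially the same route as the paper: read off the weight-$2$ part of $f$ from the scaling laws \eqref{transAf}--\eqref{transSf} and then pass to a genuinely CR holomorphic representative. The only organizational differences are that the paper first rescales by the weight~$\le 1$ part $g=c+a_\alpha z^\alpha$ and then perturbs by a weight-$2$ polynomial $u=a_{\alpha\beta}z^\alpha z^\beta+a_0 z^0$ (so the nonlinear terms $f_\alpha f_\beta$ and $f_\alpha f^\alpha$ are absorbed into the first step), and it realizes $f$ as a polynomial in the parabolic coordinates $z^I$ corrected via Lemma~\ref{correction-lem}, whereas you evaluate the exact formulas at $p$ in one step and realize the jet directly in the formally holomorphic coordinates $\wt z^{I}$. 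One small caution: the first lemma of \S3 is stated on $\bH^n$, so when you invoke it to say that a CR holomorphic $f$ modulo $\calO_3$ is determined by $(f(p),f_\alpha(p),f_{\alpha\beta}(p),f_0(p))$, you are implicitly transporting it to $M$ via the formal embedding of Remark~\ref{remark}; this is fine, but it is worth saying explicitly.
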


\begin{proof} 
We use complex parabolic normal coordinates $z^I$ for $\th_{0}$ centered at $p$.
Let us fix $g=c+a_{\alpha}z^{\alpha}$ and set $\th=e^{2\Re g}\th_{0}$.
Then, under the scaling $\wh\th=e^{2\Re u}\th$ for a polynomial of weight $2$
 \[
 u=a_{\alpha\beta}z^\alpha z^\beta+a_{0} z^{0},
 \]
we obtain from  the transformation law \eqref{transAf} and \eqref{transSf} that
\begin{align*}
\wh\Scal&=\Scal-4n(n+1)\Re a_{0}+\calO_{2},
\\
\wh A_{\alpha\beta}&=A_{\alpha\beta}+i a _{\alpha\beta}+\calO_{2}.
\end{align*}
Thus we can fix $\Re a_{0}$ and $a_{\alpha\beta}$ so that $\wh\Scal(p)=0$ and $\wh A_{\alpha\beta}(p)=0$ hold.
Note that $\Im a_{0}=-\Re u_{0}(p)$ can be taken arbitrary.

Let $f=g+u$.
Then from the estimate \eqref{monomial-estimate}, we have $W_{\ol\alpha}f=\calO_{2}$. Thus we can use Lemma \ref{correction-lem} to modify $f$ to a CR holomorphic function by adding $\calO_{3}$-term, which does not change the normalization \eqref{first-step}.
\end{proof}

We next compute the effect of terms of $\calO_m$ in the scaling.

\begin{lem}\label{variationAf}
Let  $m\ge2$. For a CR holomorphic function $f=\calO_{m}$, set $\wh\th=e^{2\Re f}\th$. Then the following approximate transformation laws, computed in a fixed spacial from $W_{\alpha}$ for $\th$, hold:
\begin{equation}\label{A-variation}
\wh A_{IJ}-A_{IJ}=i ^{\|IJ\|-1}Z_{IJ}f+\calO_{m+2-\|IJ\|}.
\end{equation}
\end{lem}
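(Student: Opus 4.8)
The plan is to reduce everything to the single exact transformation law \eqref{transAf}, treating the three weight classes of the pair $IJ$ --- namely $\alpha\beta$, $\alpha0$ and $00$, of weights $2,3,4$ --- in turn. For the two higher classes I would obtain $\wh A_{\alpha0}$ and $\wh A_{00}$ from $\wh A_{\alpha\beta}$ by the very definitions $A_{\alpha0}=\tfrac{-i}{n+1}A_{\alpha\beta,}{}^{\beta}$ and $A_{00}=\tfrac{-1}{n(n+1)}A_{\alpha\beta,}{}^{\alpha\beta}$, that is, by covariantly differentiating and contracting the base case. In each class the goal is to show that the leading change $\wh A_{IJ}-A_{IJ}$ is a fixed covariant differential operator applied to $f$, and then to identify that operator, acting on a CR holomorphic function, with $i^{\|IJ\|-1}Z_{IJ}f$ modulo $\calO_{m+2-\|IJ\|}$.

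First I would settle the base class $IJ=\alpha\beta$. Here \eqref{transAf} gives $\wh A_{\alpha\beta}-A_{\alpha\beta}=i f_{\alpha\beta}-i f_{\alpha}f_{\beta}$. Since $f=\calO_{m}$ forces $f_{\alpha}=\calO_{m-1}$, the quadratic term is $f_{\alpha}f_{\beta}=\calO_{2m-2}$, which is $\calO_{m}$ because $m\ge2$ and is absorbed into the error. It then remains to compare the covariant Hessian $f_{\alpha\beta}$ with $Z_{\alpha\beta}f=Z_{\alpha}Z_{\beta}f$. Writing both as components of $\nabla^{2}f$ and of the flat Hessian in the respective frames, their difference splits into a contorsion contribution, equal to the difference between the Tanaka--Webster connection and the flat one contracted with $df$ and therefore $\calO_{1}\cdot\calO_{m-1}=\calO_{m}$ since the connection vanishes at $p$, plus a frame-change contribution coming from $W_{I}-Z_{I}$. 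For the latter I would use the explicit parabolic normal-coordinate expansions of \cite[Proposition 2.5]{JL} together with the CR holomorphicity $W_{\ol\alpha}f=\calO_{\infty}$, which makes $Z_{\ol\alpha}f$ one order smaller than $Z_{\alpha}f$; the point is that the frame corrections enter at high enough weight that $f_{\alpha\beta}=Z_{\alpha\beta}f+\calO_{m}$.

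For the classes $\alpha0$ and $00$ I would covariantly differentiate \eqref{transAf} and contract, keeping everything in the fixed special frame for $\th$ as the statement requires. Three kinds of correction then appear and each must be shown to lie in $\calO_{m+2-\|IJ\|}$: the terms nonlinear in $f$ and their derivatives, which gain weight from the extra factors of $df$; the discrepancy between $\wh\th$- and $\th$-covariant derivatives and index raisings, governed by $\wh\omega-\omega$ and by $\wh h^{\alpha\ol\beta}=e^{-2\Re f}h^{\alpha\ol\beta}$, both of which differ from the unhatted objects by $\calO_{m}$; and the curvature terms produced on commuting derivatives. The surviving leading term is a contracted covariant derivative of $i f_{\alpha\beta}$, and here I would use the commutation relation $u_{\alpha\ol\beta}-u_{\ol\beta\alpha}=i h_{\alpha\ol\beta}u_{0}$ together with $f_{\ol\alpha}=\calO_{\infty}$ to trade each contracted holomorphic derivative for a Reeb derivative. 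Each such trade contributes a factor of $i$ from the commutator, and converting $\nabla_{0}$ into $Z_{0}=-iT$ contributes another; for instance $f_{\alpha\beta}{}^{\beta}=i(n+1)f_{0\alpha}+(\text{lower weight})$ and $f_{0\alpha}=iZ_{\alpha0}f+(\text{lower weight})$, whence $\wh A_{\alpha0}-A_{\alpha0}=i^{2}Z_{\alpha0}f+\calO_{m-1}$, so that the constants $\tfrac{-i}{n+1}$ and $\tfrac{-1}{n(n+1)}$ are exactly what convert the result into $i^{\|IJ\|-1}Z_{IJ}f$.

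I expect the decisive difficulty to be the uniform weight bookkeeping rather than the identification of the leading term. The delicate point is to verify, for every $m$ and especially for the $00$ class where one differentiates and contracts twice, that no correction --- from the frame change $W_{I}-Z_{I}$, from the connection defect, or from the commutators --- drops below the threshold $m+2-\|IJ\|$; this is precisely where the exact orders in the expansions of \cite{JL}, and the extra weight gained from each factor of $df$ and from each application of $W_{\ol\alpha}$ to CR holomorphic data, must be used with care. A secondary bookkeeping task is to track the accumulated powers of $i$ so that they combine to $i^{\|IJ\|-1}$.
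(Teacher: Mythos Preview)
Your plan is correct and follows essentially the same route as the paper. The only difference is one of packaging: the paper imports \cite[Lemma 3.6]{JL} directly, which already records
\[
\wh A_{\alpha\beta}-A_{\alpha\beta}=2i\up_{\alpha\beta}+\calO_{m},\quad
\wh A_{\alpha\beta,}{}^{\beta}-A_{\alpha\beta,}{}^{\beta}=2i\up_{\alpha\beta}{}^{\beta}+\calO_{m-1},\quad
\wh A_{\alpha\beta,}{}^{\alpha\beta}-A_{\alpha\beta,}{}^{\alpha\beta}=2i\up_{\alpha\beta}{}^{\alpha\beta}+\calO_{m-2},
\]
so that all of your ``three kinds of correction'' (nonlinear terms, $\wh\nabla$ versus $\nabla$ and $\wh h$ versus $h$, and curvature from commutators) are absorbed in one citation; you are proposing to rederive exactly this content. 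After that, both arguments coincide: set $2\up=f+\ol f$, use $f_{\ol\alpha}=0$ and the commutation relation to obtain $f_{\alpha\beta}{}^{\beta}=i(n+1)f_{\alpha0}+\calO_{m-1}$ and $f_{\alpha\beta}{}^{\alpha\beta}=-n(n+1)f_{00}+\calO_{m-2}$, and then convert $\nabla_{0}$ to $Z_{0}$ via $T=iZ_{0}$. The paper treats the passage from covariant derivatives $f_{IJ}$ to $Z_{IJ}f$ in that single phrase ``substituting $T=iZ_{0}$,'' whereas you spell out the frame and connection defects; your more explicit treatment is harmless but not needed beyond what \cite[Proposition 2.5]{JL} already supplies.
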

\begin{proof}

Recall from \cite[Lemma 3.6]{JL} that, for a scaling by $\up\in\calO_{m}$, we have
\begin{align*}
\wh A_{\alpha\beta}&-A_{\alpha\beta}=2i\up_{\alpha\beta}+\calO_{m},\\
\wh A_{\alpha\beta,}{}^{\beta}&-A_{\alpha\beta,}{}^{\beta}=2i\up_{\alpha\beta}{}^{\beta}+\calO_{m-1},\\
\wh A_{\alpha\beta,}{}^{\alpha\beta}&-A_{\alpha\beta,}{}^{\alpha\beta}=
2i\up_{\alpha\beta}{}^{\alpha\beta}+\calO_{m-2}.
\end{align*}
In our setting, $2\up=f+\ol f$ and hence $2\up_{\alpha}=f_{\alpha}$,
$2\up_{\alpha\beta}=f_{\alpha\beta}$.
Using the commutation relations of covariant derivatives (see e.g. \cite[Lemma 2.3]{L}) and $f^{\alpha}=0$, we obtain
\begin{align*}
f_{\alpha\beta}{}^{\beta}&=i(n+1)f_{\alpha0}+\calO_{m-1},
\\
f_{\alpha\beta}{}^{\beta\alpha}&=i(n+1)f_{\alpha0}{}^{\alpha}+\calO_{m-2}
=-n(n+1)f_{00}+\calO_{m-2}.
\end{align*}
Comparing with the normalization in the definition of $A_{IJ}$ and substituting $T=iZ_{0}$, 
we get \eqref{A-variation}.
\end{proof}

By induction, suppose that we have a pseudo-Einstein contact form $\th$ satisfying \eqref{torsion-normalization}
with $m$ replaced by $m-1$.  We first choose $f=\calO_{m}$ as a polynomial of weight $m$
so that \eqref{torsion-normalization} holds.

For $\calI=I_{1}\dots I_{k}$ of weight $m$, we apply $\nabla_{I_{k}\dots I_{3}}$ to the both  sides of \eqref{A-variation}.
Then,  using $T=iZ_{0}$, we get
\[
\wh A_{\calI}=A_{\calI}+i^{m-|\calI|+1}Z_{\calI}f+\calO_{2}.
\]
(Note that $m-|\calI|$ is the number of $0$ in the list $\calI$.)
Hence $\wh A_{(\calI)}=\calO_{1}$ for all $\calI$ with weight $m$ if and only if
\[
f=-\sum_{\|\calI\|=m}\frac{i^{m-|\calI|+1}}{|\calI|!}A_{\calI}z^{\calI}+\calO_{m+1}.
\]
We define $f_{m}$ by this formula. Then \eqref{monomial-estimate} gives
$W_{\ol\alpha}f_{m}=\calO_{m+2}$.  Thus we can make $\calO_{m+1}$-correction and get a CR holomorphic $f$ satisfying \eqref{torsion-normalization}.  This completes the proof of the theorem.

\begin{rem}
In the inductive step above, we have changed the parabolic normal coordinates according to the modification of the contact forms and hence we  need to estimate the errors caused by that. We have omitted this part as it is exactly same as \cite[\S3]{JL}. 
\end{rem}

We will conclude the paper with the proof of the proposition stated in the introduction.
For a pseudo-Einstein contact form, 
$
\Scal_{\alpha},
\Scal_{\alpha}{}^{\alpha},
  \Scal_{0}
  $ are respectively constant multiples of
  $A_{\alpha0},  A_{00}, \Re A_{00}$, which vanish at $p$.
  Thus we are done for the case $n=1$. 
  When $n\ge2$, the Einstein equation $n\Ric_{\alpha\conj\beta}=\Scal h_{\alpha\conj\beta}$
gives
 \[
n \Ric_{\alpha\conj\beta,}{}^{\conj\beta}=\Scal_{\alpha},\quad
 n\Ric_{\alpha\conj\beta,}{}^{\alpha\conj\beta}=\Scal_{\alpha}{}^{\alpha}.
 \]
We have already seen that the left-hand sides vanish at $p$ and  the proof is completed.

\section*{Acknowledgements}
This research was supported by KAKENHI 20H00116.

\end{document}